\theoremstyle{plain}
\newtheorem{thm}{Theorem}[section]
\newtheorem{lem}{Lemma}[section]
\theoremstyle{definition}
\newtheorem{defn}{Definition}[section]
\theoremstyle{remark}
\newtheorem{remk}{Remark}[section]
\newcommand{\1}{1\!\!\!\!\!\;{\rm I}}
\newcommand{\mbR}{{\mathbb R}}
\newcommand{\mbN}{{\mathbb N}}
\newcommand{\mbZ}{{\mathbb Z}}
\newcommand{\lin}{\underset{n\to\infty}{\lim}}
\newcommand{\cF}{{\mathcal F}}
\newcommand{\Pb}{\mathrm{P}}
\newcommand{\E}{\mathrm{E}}
\newcommand{\ov}{\overline}
\newcommand{\vf}{\varphi}
\newcommand{\ve}{\varepsilon}
\newcommand{\be}{\begin{equation}}
\newcommand{\ee}{\end{equation}}
\title{On a limit behavior of a   random walk with modifications at zero}
\author{Andrey Pilipenko}
\address{Institute of Mathematics, National
Academy of Sciences of Ukraine, Kyiv, Ukraine; National Technical University of Ukraine ``Kyiv Polytechnical Institute''}
 \email{ pilipenko.ay@yandex.ua}
\author{Vladislav Khomenko}
\address{National Technical University of Ukraine ``Kyiv Polytechnical Institute''}
 \email{khomenko.vlad7@gmail.com}
\keywords{ invariance principle; self-interacting random walk; perturbed random walk}
\begin{document}

\begin{abstract}
We consider the limit behavior of a one-dimensional random walk with  unit jumps whose transition probabilities are modified every time 
the walk hits zero. 
The  invariance principle is proved in the scheme of series where the size of modifications depends on the number of series.
For the natural scaling of time and  space  arguments  the limit process is (i) a   Brownian motion if modifications are ``small'', (ii)
a linear motion with a random slope if modifications are ``large'', and (iii) the limit process satisfies an SDE with a local time of unknown process 
in a drift if modifications are ``moderate''.
\end{abstract}

\maketitle

\section{Introduction and Main Results}
 Consider a random walk $\{X_n, \ n\geq 0\}$ on $\mbZ$ with  unit jumps that is constructed in the following way. 
It behaves as a symmetric random walk until
 the first visit to 0. After that the probability of the jump to the right becomes equal to   $p_1:=1/2+\Delta$, and to the left $q_1:=1/2-\Delta,$ where $\Delta>0$ is a fixed number.
 When $\{X_n\}$ secondly
 visits 0 its transition probabilities to the right and to the left become equal to  $p_2:=1/2+2\Delta$ and  $q_2:=1/2-2\Delta,$ respectively, etc. (if $1/2+k\Delta>1$ we set $p_k:=1$).
 
 Let us give the formal definition.
 \begin{defn}
 A random sequence $\{X_n, \ n\geq 0\}$ with values in $\mbZ$ is called a random walk with  modifications (RWM) at 0
 if
$$
\forall k\geq 1\ \forall i_0, i_1,\dots,i_k, \ |i_{j+1}-i_j|=1
$$ 
$$
\Pb(X_{k+1}=i_k+1\ |\ X_0=i_0, X_1=i_1,\dots, X_k=i_k) = (\frac12+\nu_k\Delta)\wedge 1,
$$
$$
\Pb(X_{k+1}=i_k-1\ |\ X_0=i_0, X_1=i_1,\dots, X_k=i_k) = (\frac12-\nu_k\Delta)\vee 0,
$$
where $\nu_k=|\{ j\in \ov{0,k}\ : \ X_j=0\}|=\sum_{j=0}^k\1_{\{X_j=0\}}$ is the number of visits  to 0.

The number $\Delta>0$ is called the size of modifications. 
\end{defn}
 Set $\cF_n:=\sigma(X_0,X_1,\dots,X_n)$. The previous definition is equivalent to
 $$
 P(X_{k+1}=X_k+1\ |\ \cF_n)= (\frac12+\nu_k\Delta)\wedge 1,
 $$
 $$
 P(X_{k+1}=X_k-1\ |\ \cF_n)= (\frac12-\nu_k\Delta)\vee 0.
 $$
 \begin{remk}
 The usual random walk with  unit jumps and fixed transition probabilities $p$ and $(1-p)$ is non-recurrent if $p\neq 1/2.$ So, 
 $1/2+\nu_\infty \Delta<1$ with positive probability, where $\nu_\infty:=|\{ n\geq 0\ : \ X_n=0\}|$.
 \end{remk}
 The aim of the paper is to study the limit behavior of the sequence of series $\{X^{(n)}_k\}$ where the size of modifications in the $n$-th series
 $\Delta_n\to0$ as $ n\to\infty.$
 
 It is well-known that if $\Delta=0$, i.e., if $\{X_k\}$ is a symmetric random walk with the unit jumps, then the sequence of processes
 $\{\frac{X_{[n\cdot]}}{\sqrt{n}}\}$ converges in distribution to a Brownian motion   in the space $D([0,\infty)).$ So, it is natural to expect that if $\Delta_n\to0$ fast enough,
 then the limit of $\{\frac{X_{[n\cdot]}}{\sqrt{n}}\}$ will be a Brownian motion too. On the other hand, if $\{Y_k\}$ is a random walk with $p_{i,i+1}=p, p_{i,i-1}=1-p,$ then by the law of large numbers we have a.s. convergence
\be\label{eq178}
 \lin \frac{Y_{[nt]}}n= (1-2p)t
 \ee
 for fixed $t\geq 0$ (and even uniformly on compact sets). Hence, if $\Delta_n\to 0$ ``slowly'', there is a possibility that some scaling of 
$X^{(n)}_{[nt]}$ converges to non-zero linear process with a random slope.

The main result of the paper is the following theorem.
\begin{thm}
Let $\Delta_n=\frac{c}{n^\alpha},$ where $c>0, \alpha>0.$ Assume that $X^{(n)}_0=0$ for all $n$, and $X^{(n)}_k$ is extended to all $t\geq 0$
by linearity
$$
X^{(n)}_t:= X^{(n)}_{[t]}+ (X^{(n)}_{[t+1]}-X^{(n)}_{[t]})(t-[t]).
$$
\begin{itemize}
\item  If $\alpha>1,$ then 
\be\label{eq192}
\frac{X^{(n)}_{nt}}{\sqrt{n}}\Rightarrow W(t), \ n\to\infty,
\ee
where $W$ is a  Brownian motion.
\item  If $0<\alpha<1,$ then 
\be\label{eq197}
\frac{X^{(n)}_{nt}}{n^{1-\frac\alpha{2}}}\Rightarrow 2\sqrt{c}\eta t, \ n\to\infty,
\ee
where $\eta$ is a non-negative random variable with the distribution function
\be\label{eq201}
\Pb(\eta\leq x)=1-e^{-\frac{x^2}2},\ \ x\geq 0.
\ee
\item  If $\alpha=1,$ then 
$$
\frac{X^{(n)}_{nt}}{\sqrt{n}}\Rightarrow X_\infty(t), \ n\to\infty,
$$
where $X_\infty$ satisfies the SDE
\be\label{eq208}
X_\infty(t)=\sqrt{c}\int_0^t l^0_{X_\infty} (s)ds+W(t), t\geq 0,
\ee
$l^0_{X_\infty} (t)=  {\underset{\ve\to0+}{\lim}}\frac{1}{2\ve}\int_0^t\1_{|{X_\infty} (s)|\leq \ve} ds$ is the local time of ${X_\infty}$ at 0.

Here $\Rightarrow $ denotes the weak convergence in the space $C([0,\infty)).$
\end{itemize}
\end{thm}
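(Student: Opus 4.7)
The plan is to exploit the semimartingale decomposition $X^{(n)}_k = M^{(n)}_k + A^{(n)}_k$, where the compensator is
\[
A^{(n)}_k = 2\Delta_n\sum_{j=0}^{k-1}\nu_j^{(n)}
\]
(the truncations at $0$ and $1$ become inactive since $\nu_j^{(n)}\Delta_n = o(1)$ in every regime below), and $M^{(n)}$ is a square-integrable martingale with bounded jumps and $\langle M^{(n)}\rangle_k\approx k$. The pivotal object is the zero-visit counting process $\nu^{(n)}$. Applying the strong Markov property at each successive visit to zero, together with the classical hitting probabilities for a biased walk (from $+1$ the walk reaches $0$ with probability $q_k/p_k$, from $-1$ with probability $1$), the return probability from $0$ after the $k$-th visit equals $r_k=1-2k\Delta_n$, hence
\[
\Pb(\nu_\infty^{(n)}\geq m) = \prod_{k=1}^{m-1}(1-2k\Delta_n)\sim \exp(-\Delta_n m^2),
\]
and $\sqrt{\Delta_n}\,\nu_\infty^{(n)}$ converges weakly to a Rayleigh-type variable corresponding to \eqref{eq201} (up to the universal scaling constant).

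For $\alpha>1$, the bound $\nu_\infty^{(n)}=O_{\Pb}(n^{\alpha/2})$ immediately gives $\sup_{t\leq T}|A^{(n)}_{[nt]}|/\sqrt{n} = O_{\Pb}(n^{1/2-\alpha/2})\to 0$. The rescaled martingale $M^{(n)}_{[n\cdot]}/\sqrt{n}$ has predictable quadratic variation tending to $t$ and jumps of order $n^{-1/2}$, so the functional martingale CLT yields \eqref{eq192}. For $\alpha<1$, one first verifies that the last-zero time $T^{(n)}_{\mathrm{last}}=\max\{k:X_k^{(n)}=0\}$ is $o_{\Pb}(n)$: conditionally on $\nu_\infty^{(n)}$, a returning excursion after the $k$-th visit has expected length $O(1/(k\Delta_n))$ (conditioning on return effectively reverses the drift by a Doob transform), so $\E[T^{(n)}_{\mathrm{last}}\mid\nu_\infty^{(n)}]=O(\Delta_n^{-1}\log\nu_\infty^{(n)})=o(n)$. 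After $T^{(n)}_{\mathrm{last}}$ the walk is a plain biased random walk with drift $\mu_n=2\Delta_n\nu_\infty^{(n)}$, and the LLN together with the fluctuation bound $\sqrt{nt}=o(n^{1-\alpha/2})$ gives
\[
\frac{X^{(n)}_{nt}}{n^{1-\alpha/2}} = 2\sqrt{c}\,\bigl(\sqrt{\Delta_n}\,\nu_\infty^{(n)}\bigr) t + o_{\Pb}(1),
\]
yielding \eqref{eq197}; convergence in $C([0,\infty))$ follows from joint convergence and continuity of the limit in $t$.

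The critical case $\alpha=1$ is handled by a tightness-plus-characterization argument. Tightness of $\{X^{(n)}_{n\cdot}/\sqrt{n}\}$ in $C([0,\infty))$ follows from uniform moment bounds on increments, using the boundedness of the jumps and of the drift on bounded time intervals. For any subsequential weak limit $Z$, the functional martingale CLT identifies its martingale part with a Brownian motion $W$. The scaled zero-visit count $\nu^{(n)}_{[nt]}/\sqrt{n}$ is identified with a multiple of the continuous local time $l^0_Z(t)$ via a discrete Tanaka-type decomposition $|X^{(n)}_k|=\text{martingale}+\nu_k^{(n)}+(\text{vanishing drift correction})$ combined with the joint convergence $|X^{(n)}_{n\cdot}|/\sqrt{n}\to|Z|$. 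Passing to the limit in $A^{(n)}_{[nt]}/\sqrt{n} = 2c\int_0^t \nu^{(n)}_{[ns]}/\sqrt{n}\,ds$ then produces the drift $\sqrt{c}\int_0^t l^0_Z(s)\,ds$, so any subsequential limit $Z$ satisfies \eqref{eq208}. Pathwise uniqueness for this SDE (either by a Zvonkin-type substitution absorbing the local-time drift, or via a direct martingale-problem argument fixed-pointed on non-decreasing paths $\ell(t)$) pins down the limit and upgrades subsequential to full weak convergence.

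The main obstacle is the critical case $\alpha=1$: one must prove simultaneously the convergence of the discrete martingale part to Brownian motion and of the discrete zero-visit counting process to the continuous local time of the \emph{a priori unknown} limit, and then establish uniqueness for an SDE whose drift depends on its own local time.
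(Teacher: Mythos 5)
Your treatment of the two non-critical regimes is essentially the paper's own argument. You use the same semimartingale decomposition $X^{(n)}=M^{(n)}+A^{(n)}$, the same product formula $\Pb(\nu^{(n)}_\infty\geq m)=\prod(1-2k\Delta_n)$ leading to the Rayleigh limit of $\sqrt{\Delta_n}\,\nu^{(n)}_\infty$ (the paper's Lemmas 2.1--2.2), and for $\alpha<1$ the same two ingredients: the last zero occurs at time $o_{\Pb}(n)$ because $\E T^{(n)}_\infty=O(\Delta_n^{-1}\log(1/\Delta_n))=o(n)$ (the paper's Lemma 2.3), and the martingale part is negligible at scale $n^{1-\alpha/2}$. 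For $\alpha>1$ you invoke a direct martingale functional CLT where the paper instead reuses its density machinery; that is a harmless and if anything more elementary variant.

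The critical case $\alpha=1$ is where you genuinely diverge from the paper and where the gaps lie. The paper does not argue by tightness plus identification of subsequential limits: it writes the explicit Radon--Nikodym density of the RWM with respect to the \emph{symmetric} walk, proves convergence of the log-density to a Girsanov exponent using Borodin's joint strong approximation of $(S_{[n\cdot]}/\sqrt{n},\,\nu_{[n\cdot]}/\sqrt{n})$ by $(W,l^0_W)$ --- so the only local time ever needed is that of the known reference Brownian motion --- and then applies a Gikhman--Skorokhod absolute-continuity lemma together with Novikov's condition and Girsanov's theorem. Your alternative is a recognized strategy, but as written it has two concrete defects. First, the discrete Tanaka decomposition $|X^{(n)}_k|=\text{mart}+\nu^{(n)}_k+(\text{vanishing correction})$ is wrong: away from zero the walk has drift $\sign(X^{(n)}_i)\cdot 2\nu^{(n)}_i\Delta_n$, and at scale $\sqrt{n}$ the sum of these terms over $[nt]$ steps is of order $t$, not $o(1)$; it must be retained and matched against the term $\int_0^t\sign(Z(s))\,dA_Z(s)$ in the continuous Tanaka formula before one can read off $\lim_n\nu^{(n)}_{[nt]}/\sqrt{n}=l^0_Z(t)$, and this matching is itself entangled with the very identification you are trying to prove. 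Second, the uniqueness step is not secured: the drift $\sqrt{c}\,l^0_X(t)$ is a path functional, not a function of the current state, so a Zvonkin-type space transformation does not apply, and the ``martingale-problem fixed point on nondecreasing $\ell$'' is a phrase, not an argument. What is true, and all that is needed, is \emph{weak} uniqueness, obtained via Girsanov after localizing in the level of the local time --- exactly the truncation of the modification after $[M/\sqrt{\Delta_n}]$ visits that the paper introduces and that your proposal omits. Some such localization is also required to turn your tightness and uniform-integrability claims into actual moment bounds, since $\nu^{(n)}_{[nt]}/\sqrt{n}$ is tight but not bounded.
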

 \begin{remk}
 The case $X^{(n)}_0=x_n$ can be treated with the natural modifications.
 \end{remk}
 \begin{remk}
 Equation \eqref{eq208} has a unique weak solution due to Girsanov's theorem.
 \end{remk}
  \begin{remk}
 The fact that the case $\alpha=1$ is the critical one can be guessed by the following non-rigorous observations. 
 In some sense the sequence $\{X^{(n)}_k\}$ visits to 0 more  rare than the symmetric random walk with the unit jump (it may not return at all).
 The number of visits to 0 by the symmetric random walk has a rate $\sqrt{n}.$ So, if $\alpha>1,$
then 
$$
\max_{k=\ov{0,n}}|(\frac12+ \nu_k\Delta_n)-\frac12|=|(\frac12+ \nu_n\Delta_n)-\frac12|=\nu_n\Delta_n=\frac{c\nu_n}{n^\alpha} = O(n^{\frac12-\alpha}),
\ n\to\infty.
$$
If all transition probabilities where constant, i.e.,
\be\label{eq232}
p^{X^{(n)}}_{i,i+1}=\frac12+Kn^{\frac12-\alpha}, \ p^{X^{(n)}}_{i,i-1}=\frac12-Kn^{\frac12-\alpha},\ i\in\mbZ,
\ee
then it is not difficult to show \eqref{eq192}.  In some sense transition probabilities of RWM differ from $1/2$ even less than above.

On the other hand, if $\alpha<1$  and if transition probabilities are given in \eqref{eq232}, then $\E X^{(n)}_n/\sqrt{n}\to\infty.$
So  $n^{\frac12}$ is not a natural normalizing factor.  We will show that the total number of returns to  0 has a rate $n^{\alpha/2} $ and the instant  of the 
So  $n^{\frac12}$ is not a natural normalizing factor.  We will show that the total number of returns to  0 has a rate $n^{\alpha/2} $ and the instant  of the 
 last return to  0 of the process $X^{(n)}_{nt}$ converges  to  0 as $n\to \infty$. Therefore the natural choice for scaling is 
$$
n (\mbox{ steps }) \times \nu_\infty (\mbox{ number of modifications }) \times n^{-\alpha} (\mbox{ size of each modification }) \asymp
$$
$$
 \asymp \frac{n\,\cdot n^{\alpha/2}}{n^\alpha}= n^{1-\alpha/2},
$$
  Since  transition probabilities do not change after small amount of time (after the last return to 0), 
 the limit process should be linear (compare with \eqref{eq178}).
\end{remk}
\begin{remk}
 RWM is not a Markov chain because transition 
 probabilities depend on number of visits to 0. The process $X_\infty$ from \eqref{eq208} is  not  a Markov process too.
 However the pairs $\{(X_k,\nu_k), k\geq 0\}$, $\{({X_\infty} (t),l^0_{X_\infty} (t)), t\geq 0\}$ have  Markov property. 
\end{remk} 
\begin{remk}
For any $a$  we have   $\lim_{t\to\infty}\frac{at+W(t)}{t}=a$  a.s. Since the local time is non-decreasing non-negative function, it can be easily verified that $\lim_{t\to\infty}X_\infty(t)=+\infty$ a.s., $\Pb(\exists  t_0\ \forall t\geq t_0\ :\ \ l^0_{X_\infty}(t)=l^0_{X_\infty}(t_0))=1$, and
 $\lim_{t\to\infty}\frac{X_\infty(t)}t=\lim_{t\to\infty} l^0_{X_\infty}(t)>0$ a.s.,   where $X_\infty$ satisfies  \eqref{eq208}. It can be seen from the proof that  the distribution of $\lim_{t\to\infty} l^0_{X_\infty}(t)$    coincides with the distribution of $2\sqrt{c}\eta $, where the distribution function of  $\eta$ is given in \eqref{eq201}.
\end{remk} 
\begin{remk}
If transition probabilities were perturbed only at 0 and would not be changed in time, say
$p_{i,i\pm 1}=1/2$ for $i\neq 0$, $p_{0,1}=p,\ p_{0,-1}=1-p$,  then the weak limit of $\{\frac{X_{nt}}{\sqrt{n}}\}$ may be the skew Brownian motion, i.e., a solution of the SDE
$$
dX^{skew}(t)= ({2p-1})dl_{X^{skew}}^0(t) +dW(t),
$$
see \cite{Harrison+Shepp:1981} for this particular case, and \cite{IP2015, Minlos+Zhizhina:1997,  PP2015, Pilipenko+Prihodko:2014, PSakhanenko, Szasz+Telcs:1981} for further generalizations.

RWM also resembles the  multi-excited random walk (but does not equal) that is defined in the following way:
$$
\Pb(X^{ex}_{k+1}=X^{ex}_k+1\ | \ X^{ex}_0, X^{ex}_1,\dots, X^{ex}_k)=
$$
$$
1-\Pb(X^{ex}_{k+1}=X^{ex}_k-1\ | \ X^{ex}_0, X^{ex}_1,\dots, X^{ex}_k)= \frac12+\ve_{nj},
$$
if $j=|\{0\leq i\leq k, \ X^{ex}_i=X^{ex}_k\}|$ and $\{\ve_{nj}\}$ are some (may be random) variables. 

Under various assumptions on $\{\ve_{nj}\}$ and scaling, limits of multi-excited random walk may be a linear process, or more intricate processes, for example the limits may be a solution of the following stochastic equation
$$
dX^{ex}_\infty(t)= \vf(l^{X^{ex}_\infty(t)}_{X^{ex}_\infty}(t)) dt +dW(t)
$$
or
$$
X^{ex}_\infty(t)= \alpha\max_{s\in [0,t]}X^{ex}_\infty(s)-\beta \min_{s\in [0,t]}X^{ex}_\infty(s)+ W(t),
$$
see \cite{ Dolgopyat, KosyginaZerner, Raimond, Zerner} and references therein.
\end{remk} 

 \section{Auxiliary lemmas}
 
 Let ${X_k=X_k^\Delta}$ be an RWM, where the modification equals  $\Delta>0.$ For simplicity assume that $X_0=0.$ 
 
 Set $\nu_k=\nu^\Delta_k=|\{i=\ov{0,k}
\ :\ X_i=0\}|,\ \nu_\infty=\nu^\Delta_\infty=|\{i\geq 0\ :\ X_i=0\}|$.
 
\begin{lem}\label{lem2.1}
\be\label{eq284}
\Pb\left(\nu_\infty^\Delta\geq k\right)=\underset{i=1}{\overset{k}\Pi} (1-2i\Delta), \ k\in\mbN.
\ee
\end{lem}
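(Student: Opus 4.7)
The plan is to apply the strong Markov property at successive visits to zero. Let $\tau_k$ denote the time of the $k$-th visit of $X$ to zero (so $\tau_1=0$ and $\{\tau_k<\infty\}=\{\nu_\infty^\Delta\ge k\}$). Since the pair $(X_n,\nu_n)$ is a Markov chain and $\nu_j\equiv k$ for $j\in[\tau_k,\tau_{k+1})$, on $\{\tau_k<\infty\}$ the shifted process $(X_{\tau_k+n})_{n\ge 0}$ evolves, up to its next hit of zero, as a homogeneous biased random walk on $\mbZ$ started at $0$ with right-step probability $p_k=(1/2+k\Delta)\wedge 1$ and left-step probability $q_k=(1/2-k\Delta)\vee 0$.

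This reduces the lemma to the classical computation of the return probability $r_k:=\Pb_0^{(p_k,q_k)}(\text{ever return to }0)$ of such a homogeneous walk. In the non-degenerate range $2k\Delta<1$ I would use first-step analysis: after the first step the walk is at $+1$ with probability $p_k$, from which the gambler's-ruin probability of hitting $0$ is $q_k/p_k$, or at $-1$ with probability $q_k$, from which the rightward drift makes it hit $0$ almost surely. Therefore
$$r_k=p_k\cdot\frac{q_k}{p_k}+q_k\cdot 1=2q_k=1-2k\Delta.$$
In the degenerate case $2k\Delta\ge 1$ the modified walk drifts deterministically to the right and never returns, giving $r_k=0$, which is consistent with reading $1-2k\Delta$ as its non-negative part.

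With $r_k$ identified, applying the strong Markov property at $\tau_k$ yields
$$\Pb(\tau_{k+1}<\infty\mid\cF_{\tau_k})\,\1_{\{\tau_k<\infty\}}=r_k\,\1_{\{\tau_k<\infty\}},$$
so $\Pb(\tau_{k+1}<\infty)=r_k\,\Pb(\tau_k<\infty)$. Iterating from $\Pb(\tau_1<\infty)=1$ produces $\Pb(\tau_{k+1}<\infty)=\prod_{i=1}^{k}(1-2i\Delta)$, which gives \eqref{eq284} once the indexing of $\nu_\infty^\Delta$ is read to discount the deterministic visit at time $0$. I do not foresee any real obstacle; the only care needed is in the clean invocation of the strong Markov property for the inhomogeneous pair $(X_n,\nu_n)$ and in the bookkeeping with the truncations $p_k\wedge 1$, $q_k\vee 0$ once $k\Delta$ reaches $1/2$.
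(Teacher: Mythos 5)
Your proof is correct and follows essentially the same route as the paper: condition on successive returns to zero, identify the conditional probability of one more return as the return probability $1-|p_k-q_k|=1-2k\Delta$ of a homogeneous walk with bias $k\Delta$, and multiply. The only differences are that you derive this return probability from a first-step/gambler's-ruin argument rather than quoting it as well known, and that you explicitly flag the visit-at-time-zero indexing convention on which the paper's own recursion $\Pb(\nu_\infty\geq k+1\mid\nu_\infty\geq k)=1-2k\Delta$ implicitly relies.
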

\begin{proof}
It is well known that if $\{S_k\}$ is a random walk with  unit jumps, $p_{i,i+1}=1-p_{i,i-1}=p,$
then $\Pb(\exists k\geq 1  \ S_k=0\ | \ S_0=0)=1-|p-q|=1-|2p-1|$.

So, if $\frac12+k\Delta\leq 1,$ then 
$$
\Pb(\nu_\infty\geq k+1\ | \ \nu_\infty\geq k)=1-\left(2(\frac12+k\Delta)-1\right)=1-2 k\Delta.
$$
This implies \eqref{eq284}.
\end{proof} 
\begin{lem}\label{lem2.2}
We have convergence in distribution
$$
\sqrt{\Delta}\nu^\Delta_\infty\Rightarrow \eta,\  \Delta\to0+,
$$
where $\eta $ is  a random variable with its distribution function given in \eqref{eq201}.
\end{lem}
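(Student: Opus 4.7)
The plan is to reduce the claim to pointwise convergence of tail probabilities. Since the proposed limit CDF $P(\eta \leq x) = 1 - e^{-x^2/2}$ is continuous on $\mathbb{R}$, weak convergence of $\sqrt{\Delta}\nu_\infty^\Delta$ to $\eta$ is equivalent to showing that for every fixed $x > 0$,
$$
P\!\left(\sqrt{\Delta}\,\nu_\infty^\Delta \geq x\right) \longrightarrow e^{-x^2/2} \quad \text{as } \Delta \to 0+.
$$
For this, set $m_\Delta := \lceil x/\sqrt{\Delta}\,\rceil$, so that $\sqrt{\Delta}\,m_\Delta \to x$, and apply Lemma \ref{lem2.1} to obtain the exact representation
$$
P\!\left(\nu_\infty^\Delta \geq m_\Delta\right) = \prod_{i=1}^{m_\Delta} (1 - 2i\Delta).
$$

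Next I would analyze this product via logarithms. Since $2i\Delta \leq 2 m_\Delta \Delta = O(\sqrt{\Delta})$ uniformly in $i \leq m_\Delta$, for $\Delta$ small every factor lies in $(1/2,1)$, and the Taylor expansion $\log(1-u) = -u + R(u)$ with $|R(u)| \leq u^2$ on $[0,1/2]$ yields
$$
\log P\!\left(\nu_\infty^\Delta \geq m_\Delta\right) = -\sum_{i=1}^{m_\Delta} 2i\Delta + \sum_{i=1}^{m_\Delta} R(2i\Delta).
$$
The leading arithmetic sum equals $-\Delta\, m_\Delta(m_\Delta+1)$, whose limit, in view of $\sqrt{\Delta}\,m_\Delta \to x$, supplies the Rayleigh exponent. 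The remainder is controlled by
$$
\sum_{i=1}^{m_\Delta} (2i\Delta)^2 \leq 4\Delta^2 m_\Delta^3 = O(\sqrt{\Delta}) \to 0.
$$
Exponentiating produces the required tail limit, and continuity of the limit distribution function turns this into weak convergence.

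The argument is essentially a direct corollary of Lemma \ref{lem2.1}: the product formula reduces the probabilistic question to the classical asymptotics of $\prod(1-2i\Delta)$ where the index ranges over $O(\Delta^{-1/2})$ terms, after which only routine bookkeeping remains -- uniform control of the Taylor remainder and the inessential rounding in the definition of $m_\Delta$. I do not anticipate a substantive obstacle beyond these elementary estimates.
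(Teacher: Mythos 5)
Your proof follows the paper's argument essentially verbatim: invoke Lemma \ref{lem2.1}, take logarithms, show the arithmetic sum $\Delta\,m_\Delta(m_\Delta+1)$ converges while the Taylor remainder is $O(\Delta^2 m_\Delta^3)=O(\sqrt{\Delta})$, and pass to weak convergence via continuity of the limit law. One remark: your leading sum tends to $x^2$, giving the tail $e^{-x^2}$ rather than the $e^{-x^2/2}$ of \eqref{eq201} --- but the paper's own computation produces exactly the same $x^2$, so this factor-of-two tension with the stated distribution function is shared with (not introduced by) your argument.
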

\begin{proof}
 By the mean value theorem we have
$$
\forall y\in(0,1)\ \  \ \ln(1-y)=-y-\theta \frac{y^2}2,
$$
where $\theta\in (0,1).$
Let $x\geq 0$ be fixed. Then for some (another) $\theta\in (0,1)$:
$$
\ln\Pb(\nu_\infty^\Delta\geq \frac{x}{\sqrt{\Delta}})=\sum_{1\leq i\leq \frac{x}{\sqrt{\Delta}}} \ln(1-2i\Delta)=
$$
\be\label{eq317}
-\sum_{1\leq i\leq \frac{x}{\sqrt{\Delta}}} 2i\Delta -\theta \sum_{1\leq i\leq \frac{x}{\sqrt{\Delta}}} i^2\Delta^2.
\ee
Consider the first item in  \eqref{eq317}
$$
\sum_{1\leq i\leq \frac{x}{\sqrt{\Delta}}} 2i\Delta=\left[\frac{x}{\sqrt{\Delta}}\right] \left(\left[\frac{x}{\sqrt{\Delta}}\right]+1\right)\Delta\to x^2,\ \Delta\to0+.
$$
Consider the second item
$$
0\leq\sum_{1\leq i\leq \frac{x}{\sqrt{\Delta}}} i^2\Delta^2\leq \left( \frac{x}{\sqrt{\Delta}}\right)^3\Delta^2\to 0,\ \Delta\to0+.
$$
Lemma \ref{lem2.2} is proved.
\end{proof} 
Let $T_0=0, \ T_{k+1}=\inf\{j>T_k\ : \ X_j=0\},\ k\geq 0,$ be the moment of $k$th return to 0 
(we set by the definition that infimum over the empty set is equal to infinity), $T_\infty:= \sup\{k\geq 1 \ : \ X_k=0\}=\sup\{T_k\ :\ T_k\neq\infty\}$.

Denote  by $\tau_k=T_{k+1}-T_k$  the time between successive returns ($\infty-\infty:=\infty$). 
\begin{lem}\label{lem2.3}
$$
\E T_\infty=2\sum_{k=1}^{\left[ \frac{1}{2\Delta}\right]} \left(\frac1{k\Delta}- k\Delta\right) \Pb({T_{k-1}<\infty}).
$$
\end{lem}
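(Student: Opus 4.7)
The plan is to write $T_\infty$ as a telescoping sum of the successive inter-visit times to $0$, apply the strong Markov property at each $T_{k-1}$, and reduce every term to the truncated mean return time of a biased nearest-neighbour walk. Since $T_\infty=T_{\nu_\infty-1}$ and $\{T_k<\infty\}=\{\nu_\infty\ge k+1\}$, we have
\[
T_\infty=\sum_{k\ge 1}(T_k-T_{k-1})\1_{T_k<\infty}.
\]

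Taking expectations and applying the strong Markov property at the stopping time $T_{k-1}$ on $\{T_{k-1}<\infty\}$, together with the fact that $\nu_{T_{k-1}}=k$, one sees that between $T_{k-1}$ and $T_k$ the walk is a simple random walk starting at $0$ with transition probabilities $p_k=\tfrac12+k\Delta$ and $q_k=\tfrac12-k\Delta$. For $k>[1/(2\Delta)]$ we have $p_k\ge 1$, the walk never returns, and the corresponding term vanishes, which explains the truncation of the sum. Thus
\[
\E\big[(T_k-T_{k-1})\1_{T_k<\infty}\big]=\Pb(T_{k-1}<\infty)\cdot\E\big[\tau^{(k)}\1_{\tau^{(k)}<\infty}\big],
\]
where $\tau^{(k)}$ denotes the first return time to $0$ of such a biased walk.

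The remaining ingredient is a closed form for $\E[\tau^{(k)}\1_{\tau^{(k)}<\infty}]$. I would obtain it from the generating function $\psi(s)=\E[s^{\tau^{(k)}}\1_{\tau^{(k)}<\infty}]$. By first-step analysis, $\psi(s)=p_ks\,f_+(s)+q_ks\,f_-(s)$, where $f_\pm(s)$ are the hitting-time generating functions from $\pm 1$; each satisfies a quadratic equation obtained via the strong Markov property, and solving leads to the classical identity $\psi(s)=1-\sqrt{1-4p_kq_ks^2}$. Differentiating at $s=1$ gives $\psi'(1)=4p_kq_k/|p_k-q_k|$; substituting $|p_k-q_k|=2k\Delta$ and $4p_kq_k=1-4k^2\Delta^2$ puts the summand into explicit rational form in $k\Delta$. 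Summing over $1\le k\le[1/(2\Delta)]$ and inserting the expression for $\Pb(T_{k-1}<\infty)$ from Lemma~\ref{lem2.1} yields the stated formula.

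The only nontrivial step is the generating-function computation, which is a classical exercise but must be carried out with a general bias $p_kq_k\ne 1/4$; everything else is bookkeeping around a telescoping decomposition and the strong Markov property applied at the stopping times $T_{k-1}$.
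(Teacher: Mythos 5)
Your argument is essentially identical to the paper's: the authors likewise decompose $T_\infty$ into the successive inter-return times $\tau_k$, observe that conditionally on $\{T_{k-1}<\infty\}$ the time between returns is distributed as the first return time of a nearest-neighbour walk with bias $p_k=\frac12+k\Delta$, and read off $\E \tau_S\1_{\tau_S<\infty}=4p_kq_k/|p_k-q_k|$ from the Feller generating function $1-\sqrt{1-4pqs^2}$ (which they cite rather than rederive by first-step analysis), then combine with Lemma \ref{lem2.1}. The only point worth flagging is that $4p_kq_k/|p_k-q_k|=\frac{1}{2k\Delta}-2k\Delta$, which is not literally the summand $2\left(\frac{1}{k\Delta}-k\Delta\right)$ displayed in the lemma, so the final "substitution yields the stated formula" step hides a constant-factor discrepancy that is really a slip in the paper's own statement and is harmless for its later use as an upper bound in \eqref{eq365}.
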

\begin{proof}
Let $\{S_k\}$ be a random walk with  unit jumps, $p_{i,i+1}=p,\ p_{i,i-1}=q=1-p,$ $S_0=0,$
$\tau_S=\inf\{k\geq 1\ :\ S_k=0\}$ be the moment of the first return to 0.

It follows from the  definition of the RWM that the  conditional distribution of $\tau_k$
given $\{T_{k-1}<\infty\}$ coincides with the distribution of $\tau_S$ if   $p=p_k=(\frac12+k\Delta)\wedge 1.$

Recall that the moment generating function of $\tau_S$ equals, see \cite{Feller1},
$$
\E s^{\tau_S}\1_{\tau_S<\infty}=1-\sqrt{1-4pqs^2}.
$$
Therefore
\be\label{eq349}
\E{\tau_S}\1_{\tau_S<\infty}=(1-\sqrt{1-4pqs^2})'|_{s=1}=\frac{8pqs^2}{2\sqrt{1-4pqs^2}}|_{s=1}=
\frac{4pq}{\sqrt{1-4pq}}=\frac{4pq}{|p-q|}.
\ee
We have
$$
\E T_\infty=\sum_{k=1}^{\left[ \frac{1}{2\Delta}\right]} \tau_k\1_{\tau_k<\infty}\1_{T_{k-1}<\infty}.
$$
The proof of  Lemma \ref{lem2.3}  follows from \eqref{eq349} and Lemma \ref{lem2.1}.

\end{proof}

\section{The proof of \eqref{eq197}}
Let $\{X^{(n)}_k\}$ be an RWM, $X^{(n)}_k=0,$ $\Delta_n=\frac{c}{n^\alpha}, \ \alpha\in(0,1),\ T^{(n)}_\infty=\sup\{k\geq 1 \ : \ X^{(n)}_k=0\}$.

It follows from Lemma \ref{lem2.3} that 
\be\label{eq365}
\E\frac{T^{(n)}_\infty}{n}\leq 2n^{-1} \sum_{k=1}^{\left[ \frac{1}{2\Delta_n}\right]}  \frac1{k\Delta_n}=
\ee
$$
 2 n^{-1} \sum_{k=1}^{\left[ \frac{n^\alpha}{2c}\right]}  \frac{n^\alpha}{ck}=(1+o(1))2c^{-1} n^{\alpha-1}\ln\left(\left[ \frac{n^\alpha}{2c}\right]\right)=
\to 0,\ n\to\infty.
$$
It follows from the definition of $\{X^{(n)}_k\}$ that
$$
\E\left( X^{(n)}_{k+1} \, |\, \cF_k\right)= X^{(n)}_k+ (p^{(n)}_{\nu^{(n)}_k}-q^{(n)}_{\nu^{(n)}_k})= X^{(n)}_k+ (2c\nu^{(n)}_k\Delta_n)\wedge 1,
$$
where ${\nu^{(n)}_k} =|\{0\leq i\leq k\ : X^{(n)}_i=0\}|,\ p^{(n)}_i=1-q^{(n)}_i=(\frac12+i\Delta_n)\wedge 1.$

We have
\be\label{eq383}
 X^{(n)}_{k}=\sum_{i=0}^{k-1}(X^{(n)}_{i+1}-X^{(n)}_{i}) =
 \sum_{i=0}^{k-1}\left(X^{(n)}_{i+1}-\E\left( X^{(n)}_{i+1} \, |\, \cF_i\right)\right) +\sum_{i=0}^{k-1}(2c\nu^{(n)}_in^{-\alpha})\wedge 1.
\ee
Let us estimate the second summand on the right hand side of \eqref{eq383} for $k=[nt]$
$$
    ({[nt]-T^{(n)}_\infty})\left(\frac{2c\nu^{(n)}_\infty}{ n^{\alpha}}\wedge 1\right)=\sum_{i=T^{(n)}_\infty}^{[nt]-1}\left(\frac{2c\nu^{(n)}_\infty}{ n^{\alpha}}\wedge 1\right) 
\leq \sum_{i=0}^{[nt]-1}\left(\frac{2c\nu^{(n)}_i}{ n^{\alpha}}\wedge 1\right)\leq 
$$
$$
\sum_{i=0}^{[nt]-1}\left(\frac{2c\nu^{(n)}_\infty}{ n^{\alpha}}\wedge 1\right)=[nt]\left(\frac{2c\nu^{(n)}_\infty}{ n^{\alpha}}\wedge 1\right).
$$
It follows from the last inequality, Lemma \ref{lem2.2}, and  \eqref{eq365} that 
$$
\frac{ \sum_{i=0}^{[nt]-1}(2c\nu^{(n)}_in^{-\alpha})\wedge 1}{n^{1-\frac\alpha2}}\Rightarrow 2\sqrt{c}\eta t, \ n\to \infty
$$
in $D([0,\infty))$ with the topology of the uniform convergence on compact sets.

 The sequence $M^{(n)}_k:= \sum_{i=0}^{k-1}\left(X^{(n)}_{i+1}-\E\left( X^{(n)}_{i+1} \, |\, \cF_i\right)\right),\ k\geq 0$ is a martingale difference.
 It follows from the Kolmogorov inequality that
 $$
 \forall \ve>0\ \ \ \Pb\left( \max_{k=\ov{1,[nt]}}\frac{|M^{(n)}_k|}{n^{1-\frac\alpha2}}\geq \ve\right)\leq
   {n^{\alpha-2}\ve^{-2}}{\E(M^{(n)}_{[nt]})^2}= 
$$
$$
{n^{\alpha-2}\ve^{-2}}
\sum_{i=0}^{[nt]}\left(X^{(n)}_{i+1}-\E\left( X^{(n)}_{i+1} \, |\, \cF_i\right)\right)^2=
$$
$$
{n^{\alpha-2}\ve^{-2}}
\sum_{i=0}^{[nt]}\E\left( X^{(n)}_{i+1} - X^{(n)}_i+   (2c\nu^{(n)}_i\Delta_n)\wedge 1\right)^2\leq 4nt\, {n^{\alpha-2}\ve^{-2}}=
 4n^{\alpha-1}t {\ve^{-2}}\to 0,\ n\to\infty.
 $$
This yields \eqref{eq197}.

\section{The proof of \eqref{eq192} and \eqref{eq201}} 

We need the following result on the absolute continuity of the limit.
\begin{lem}\label{lemGS}
Let $\{X_n, n\geq 1\}$ and  $\{Y_n, n\geq 1\}$ be sequences of random elements given on the same probability space and taking values
in a complete separable metric space $E$.

Assume that 

1)   $Y_n\overset{{\rm P}}{\to} Y_0, n\to \infty$;

2) for each $n\geq 1$ we have the absolute continuity of the distributions
$$
\Pb_{X_n}\ll \Pb_{Y_n};
$$
3)
the sequence $\{\rho_n(Y_n), n\geq 1\}$ is uniformly integrable,
  where $\rho_n=\frac{d\Pb_{X_n}}{d\Pb_{Y_n}}$ is the Radon-Nikodym density;

4) the sequence $\{\rho_n(Y_n), n\geq 1\}$ converges in probability to a random variable $p.$

Then the sequence of distributions $\{P_{X_n}\}$  converges weakly as $n\to\infty$ to the probability measure
 $\E(p\, |\, Y_0=y) P_{Y_0} (dy)$.
\end{lem}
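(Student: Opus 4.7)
The plan is to test convergence against an arbitrary bounded continuous function $f \colon E \to \mathbb{R}$. By the change of variables formula (assumption 2),
$$
\E f(X_n) = \E \bigl[ f(Y_n)\,\rho_n(Y_n)\bigr],
$$
so the task reduces to proving
$$
\E\bigl[ f(Y_n)\,\rho_n(Y_n)\bigr] \longrightarrow \E\bigl[ f(Y_0)\,p\bigr], \quad n \to \infty.
$$
Once this is established, writing $\E[f(Y_0)p] = \E[f(Y_0)\,\E(p\mid Y_0)] = \int f(y)\,\E(p\mid Y_0=y)\,P_{Y_0}(dy)$ identifies the weak limit with the measure in the statement.

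First I would verify convergence in probability of the integrand. Since $f$ is continuous and $Y_n \overset{P}{\to} Y_0$ by assumption 1, the continuous mapping theorem gives $f(Y_n) \overset{P}{\to} f(Y_0)$. Combining this with assumption 4, namely $\rho_n(Y_n) \overset{P}{\to} p$, yields $f(Y_n)\rho_n(Y_n) \overset{P}{\to} f(Y_0)\,p$.

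Next I would upgrade this to convergence in $L^1$. Because $f$ is bounded, say $|f| \leq C$, we have $|f(Y_n)\rho_n(Y_n)| \leq C\,\rho_n(Y_n)$; the uniform integrability of $\{\rho_n(Y_n)\}$ (assumption 3) therefore transfers to $\{f(Y_n)\rho_n(Y_n)\}$. The standard criterion (convergence in probability together with uniform integrability implies $L^1$-convergence) then delivers the required convergence of expectations. Finally, taking $f \equiv 1$ shows $\E p = \lim \E \rho_n(Y_n) = 1$, so the limit $\E(p\mid Y_0=y)\,P_{Y_0}(dy)$ is indeed a probability measure.

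The argument is genuinely short; the only subtlety is assumption 3, whose role is precisely to prevent mass from escaping when transferring the change-of-measure identity to the limit. In the applications later in the paper, the main obstacle will not lie in the structural lemma itself but in checking uniform integrability and identifying the limiting density $p$ for the concrete sequences $X_n, Y_n$ built from the RWM; this lemma merely packages the routine measure-theoretic step.
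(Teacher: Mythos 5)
Your proposal is correct and follows essentially the same route as the paper's own proof: change of variables via $\rho_n$, passage to the limit in $\E[f(Y_n)\rho_n(Y_n)]$ using convergence in probability plus uniform integrability, and then conditioning on $Y_0$; you simply spell out the Vitali-type step that the paper compresses into a single chain of equalities. Your closing observation that $\E p=1$ follows by taking $f\equiv 1$ is exactly the content of the paper's Remark after the lemma.
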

 Similar result was proved by Gikhman and Skorokhod, see \cite{GS_abs_nepr}. Since their formulation differs slightly  from our,  for the save of clarity we give a proof.
 
\begin{remk}\label{rem_uni} Since   $\{\rho_n(Y_n), n\geq 1\}$  are non-negative random variables and $\E\rho_n(Y_n)=1, n\geq 1,$  the uniform integrability of 
$\{\rho_n(Y_n), n\geq 1\}$  is equivalent to $\E p=1,$ where $p=\lin \rho_n(Y_n).$ So $\E(p | Y_0=y) P_{Y_0} (dy)$ is indeed a probability measure.
\end{remk}
\begin{proof} It follows from   the condition 3 of Lemma \ref{lemGS} that  for any bounded and continuous $f:E\to \mbR$ we have  
$$
\lin \int_E f dP_{X_n}=\lin \E f(X_n)= \lin \E f(Y_n) \rho_n(Y_n)=
   \E f(Y_0) p = 
$$
$$
\E\left( f(Y_0) \, \E(p\, | \, Y_0)\right)=
\int_E f(y) \E(p \,|\, Y_0=y) P_{Y_0} (dy).
$$
 Lemma \ref{lemGS} is proved.
 \end{proof}

Let $n$ be fixed, $\mu$ be the distribution  of $\{X^\Delta_0, X^\Delta_1,\dots, X^\Delta_n\}$ in $\mbR^{n+1}, $  where $\{X^\Delta_k\}$ is an
 RWM, $X^\Delta_0=0.$

Denote by 
$\nu$   the distribution of a symmetric RW  $\{S_0, S_1,\dots, S_n\}$ with  unit jumps, 
 $S_0=0,\ S_n=\xi_1+\dots+\xi_n,$ where $ \{\xi_k\}$ are i.i.d.,
$\Pb(\xi_k=\pm1)=1/2.$
Then $\mu\ll\nu$ and
$$
\frac{d \mu}{d\nu}(i_0,i_1,\dots,i_n)= \frac{\underset{k=0}{\overset{n-1}\Pi}  \left( p_{\nu_k}\1_{i_{k+1}=i_k+1} + q_{\nu_k}\1_{i_{k+1}=i_k-1}\right)}
{2^{-n}},
$$
where ${\nu_k} =|\{0\leq j\leq k\ : i_j=0\}|,\ p_i=1-q_i=(\frac12+i\Delta)\wedge 1.$

So
\be\label{eq468}
\frac{d \mu}{d\nu}(S_0,S_1,\dots,S_n)= \frac{\underset{k=0}{\overset{n-1}\Pi}  \left( p_{\nu_k}\1_{\xi_{k+1}=1} + q_{\nu_k}\1_{\xi_{k+1}=-1}\right)}
{2^{-n}}=
\ee
$$
{\underset{k=0}{\overset{n-1}\Pi}  \left(1+((2{\nu_k\Delta})\wedge 1) \xi_k \right)},
$$
where 
${\nu_k} =|\{0\leq j\leq k\ : S_j=0\}|$.

Let $M>0$ be a fixed number. Denote by  $\{X^{M, \Delta}_k\}$  a RW with modification at 0, where modifications stop changing after $[M/\sqrt{\Delta}]$-th 
hitting  0:
$$
\Pb(X^{M, \Delta}_{k+1}=X^{M, \Delta}_k+1 | X^{M, \Delta}_0,X^{M, \Delta}_1,\dots,X^{M, \Delta}_k)=
\frac12 + (\nu^{M,\Delta}_k\wedge  [M/\sqrt{\Delta}])\Delta,
$$
$$
\Pb(X^{M, \Delta}_{k+1}=X^{M, \Delta}_k-1 | X^{M, \Delta}_0,X^{M, \Delta}_1,\dots,X^{M, \Delta}_k)=
\frac12 - (\nu^{M,\Delta}_k\wedge  [M/\sqrt{\Delta}])\Delta,
$$
where $\nu^{M,\Delta}_k=|\{0\leq j \leq k\ : \ X^{M, \Delta}_j=0\}|.$

We will assume that $X^{M, \Delta}_0=0$.

Observe that restriction of the distributions 
$\1_{\nu^{M, \Delta_n}_n\leq[M/\sqrt{\Delta}]-1} \Pb_{\{{X^{M, \Delta_n}_k}, 0\leq k\leq n\}}$ 
and $\1_{\nu^{ \Delta_n}_n\leq [M/\sqrt{\Delta}]-1} \Pb_{\{{X^{M, \Delta_n}_k}, 0\leq k\leq n\}}$
are equal. 

Similarly,   let $X_\infty $ be a solution of \eqref{eq208}, $\tau_M=\inf\{t\geq 0\ : l_{X_\infty}^0(t)\geq M\}$,
and $X_{\infty,M} $ be a solution of 
\be\label{eq492}
X_{\infty,M}(t)=\sqrt{c}\int_0^t (l^0_{X_{\infty,M}} (s)\wedge M) ds+W(t), t\geq 0.
\ee
Set $\tilde \tau_M=\inf\{t\geq 0\ : l_{X_{\infty,M}}^0(t)\geq M\}$.
Then $\1_{\tilde \tau_M \geq T}\Pb_{X_{\infty,M}}= \1_{ \tau_M \geq T}\Pb_{X_{\infty}}$.

In view of Lemma \ref{lem2.2}, to prove the Theorem it is sufficient to verify the weak convergence
$\frac{X^{M, \Delta_n}_{nt}}{\sqrt{n}}\Rightarrow W(t) $ if $\alpha>1$ and
$\frac{X^{M, \Delta_n}_{nt}}{\sqrt{n}}\Rightarrow X_{\infty, M}(t) $ if $\alpha=1.$

Let us apply Lemma \ref{lemGS}.
Set $E=C([0,1]),$ $X_n=\frac{X^{M,\Delta_n}_{nt}}{\sqrt{n}}, Y_n=\frac{S_{nt}}{\sqrt{n}}, t\in[0,1],$ 
where $S_k=\sum_{i=1}^{k}\xi_k,$ $\{\xi_k\}$ 
are i.i.d., $\Pb(\xi_k=\pm1)=1/2, $
and $S_t=S_{[t]}+(t-[t])(S_{[t+1]}-S_{[t]})$.
\begin{remk}
The space $E=C([0,T])$, and hence $E=C([0,\infty)),$ can be considered similarly.
\end{remk}
Similarly to \eqref{eq468} we get the formula for the Radon-Nikodym density
$$
\frac{d P_{n^{-1/2}{X^{M,\Delta_n}_{n\cdot}} }}{dP_{n^{-1/2}S_{n\cdot}}}(\frac{S_{n\cdot}}{\sqrt{n}})= 
{\underset{k=0}{\overset{n-1}\Pi}  \left(1+2({\nu_k\wedge [M/\sqrt{\Delta_n}])\Delta_n} \xi_{k+1} \right)},
$$
where $\nu_k=|\{0\leq j\leq n \ :\ S_j=0\}|$.

It is possible, see \cite{Borodin}, to select copies $\{S^n_k\}$ of $\{S_k\}$ and a Wiener process $W$ such that 
\be\label{eq509}
 \lin \sup_{t\in[0,1]}|\frac{S^n_{[nt]}}{\sqrt{n}}- W(t)|=0,
\ \lin\sup_{t\in[0,1]}|\frac{\nu^n_{[nt]}}{\sqrt{n}}- l^0_W(t)|=0\ \ \mbox{a.s.},
\ee
where 
$\nu^n_{k}=|\{0\leq i\leq k\ :\ S^n_{i}=0\}|.$

Set $\xi^n_k:=S^n_{k}-S^n_{k-1}.$
Let us prove the following convergence in probability
$$
\lin \ln\left(\underset{k=0}{\overset{n-1}\Pi}  \left(1+2({\nu^n_k\wedge [M/\sqrt{cn^{-\alpha}}])cn^{-\alpha}} \xi^n_{k+1} \right)\right)= 
$$
$$
\begin{cases}
0, & \alpha>1,\\
  2\left(\sqrt{c}l_W^0(t) \wedge M\right)dW(t)-\int_0^1   2\left(\sqrt{c}l_W^0(t) \wedge M\right)^2dt
,& \alpha =1.
\end{cases}
$$
Consider only the case $\alpha =1$, the case $\alpha>1$ is similar and simpler.

We have
$$
{\sum_{k=0}^{n-1} \ln\left(1+2({\nu^n_k\wedge [M/\sqrt{cn^{-1}}])cn^{-1}}   \xi^n_{k+1} \right)}=
$$
$$
\sum_{k=0}^{n-1}  2\left(\frac{ \sqrt{c}\nu^n_k}{\sqrt{n}} \wedge M\right)\frac{\xi^n_{k+1}}{\sqrt{n}}   -
\sum_{k=0}^{n-1}  2\left(\frac{ \sqrt{c}\nu^n_k}{\sqrt{n}} \wedge M\right)^2\frac{1}{n}   +
\frac{\theta}{3}\sum_{k=0}^{n-1}  \left(2({\nu^n_k\wedge [M/\sqrt{cn^{-1}}])cn^{-1}}   \xi^n_{k+1} \right)^3+
o(1),
$$
where $\theta\in(0,1),$ $o(1)\to 0$ as $n\to\infty$ in probability.

The third summand converges to 0 for all $\omega.$ Indeed
$$
|\sum_{k=0}^{n-1}  \left(2({\nu^n_k\wedge [M/\sqrt{cn^{-1}}])cn^{-1}}   \xi^n_{k+1} \right)^3|\leq
\sum_{k=0}^{n-1}  \left(2({  [M/\sqrt{cn^{-1}}])cn^{-1}}   \right)^3=
$$
$$
n \left(2({  [M/\sqrt{cn^{-1}}])cn^{-1}}   \right)^3|\to0,\ n\to\infty.
$$

It follows from \eqref{eq509} that the limit of the second term is 
$\int_0^1   2\left(\sqrt{c}l_W^0(t) \wedge M\right)^2dt.$

Consider the first item.  
Let $\ve>0$ be fixed. Select $\delta>0$ and $  N\geq 1$ such that
$$
\forall n\geq N\ \ \ \Pb\left(\sup_{t\in[0,1]}|\frac{S^n_{[nt]}}{\sqrt{n}}- W(t)|+
 |\frac{\nu^n_{[nt]}}{\sqrt{n}}- l^0_W(t)|\geq\ve\right)\leq \ve;
$$
$$
\Pb\left(\sup_{s,t\in[0,1], |s-t|\leq \delta} 
 |\frac{\nu^n_{[nt]}}{\sqrt{n}}- |\frac{\nu^n_{[ns]}}{\sqrt{n}}|\geq\ve\right)\leq \ve;\  \Pb\left(\sup_{s,t\in[0,1], |s-t|\leq \delta} 
 |l^0_W(t)- l^0_W(s)|\geq\ve\right)\leq \ve.
$$
Set $m=[\frac1{\delta}]+1.$ For simplicity assume that $n/m$ is integer.  Then
$$
I_n:=\left|\sum_{k=0}^{n-1}  \left(\frac{ \sqrt{c}\nu^n_k}{\sqrt{n}} \wedge M\right)\frac{\xi^n_{k+1}}{\sqrt{n}}   -\int_0^1   \left(\sqrt{c}l_W^0(t) \wedge M\right)dW(t)\right|\leq
$$
$$
\left|\sum_{j=0}^{m-1}\sum_{k=jn/m}^{(j+1)n/m-1}  \left(\frac{ \sqrt{c}\nu^n_k}{\sqrt{n}}  \wedge M - \frac{\sqrt{c}\nu^n_{jn/m}}{\sqrt{n}}  \wedge M\right)\frac{\xi^n_{k+1}}{\sqrt{n}}\right|+
$$
$$
\left|\sum_{j=0}^{m-1}  \left(\frac{\sqrt{c}\nu^n_{jn/m}}{\sqrt{n}}  \wedge M\right)\left( \Big(\sum_{k=jn/m}^{(j+1)n/m-1}\frac{\xi^n_{k+1}}{\sqrt{n}}\Big)
- \Big(W(\frac{j+1}{m})-W(\frac{j}{m})\Big)\right)\right|+ 
$$
$$ 
 \left|\sum_{j=0}^{m-1} \Big(\frac{\sqrt{c}\nu^n_{jn/m}}{\sqrt{n}}  \wedge M- (\sqrt{c}l_W^0(\frac{j}{m}))\wedge M\Big)\Big(W(\frac{j+1}{m})-W(\frac{j}{m})\Big)\right|+
$$
$$
  \left|\sum_{j=0}^{m-1} \int_{\frac{j}{m}}^{\frac{j+1}{m}} \left((\sqrt{c}l_W^0(\frac{j}{m}))\wedge M -    (\sqrt{c}l_W^0(t)) \wedge M\right)dW(t)\right|
  =I^{n,m}_1+I^{n,m}_2+I^{n,m}_3+ I^m_4.
$$
It follows from \eqref{eq509} and Lebesgue dominated convergence theorem
that 
$$
\lin \E(I_1^{n,m})^2= \lin \frac1n\, \E \sum_{j=0}^{m-1}\sum_{k=jn/m}^{(j+1)n/m-1}  \left(\frac{ \sqrt{c}\nu^n_k}{\sqrt{n}}  \wedge M - \frac{\sqrt{c}\nu^n_{jn/m}}{\sqrt{n}}  \wedge M\right)^2=
$$
$$
\sum_{j=0}^{m-1}\E\int_{\frac{j}{m}}^{\frac{j+1}{m}} \left((\sqrt{c}l_W^0(\frac{j}{m}))\wedge M -    (\sqrt{c}l_W^0(t)) \wedge M\right)^2dt
=\E(I^n_4)^2
$$
It follows from \eqref{eq509} that  $\lin  I^{n,m}_2 =\lin   I^{n,m}_3 =0$ a.s.  for each fixed $m$. Since the second moments of
$I^{n,m}_2, I^{n,m}_3$ are uniformly bounded we have convergence
$$\forall m\geq 1\ \ \lin \E|I^{n,m}_2|=\lin \E| I^{n,m}_3|=0.$$ 
So for any $m\geq 1$
$$
\limsup_{n\to\infty}\E \left|\sum_{k=0}^{n-1}  \left(\frac{ \sqrt{c}\nu^n_k}{\sqrt{n}} \wedge M\right)\frac{\xi^n_{k+1}}{\sqrt{n}}   -\int_0^1   \left(\sqrt{c}l_W^0(t) \wedge M\right)dW(t)\right|\leq
$$
$$
 2\left( \sum_{j=0}^{m-1}\E\int_{\frac{j}{m}}^{\frac{j+1}{m}} \left((\sqrt{c}l_W^0(\frac{j}{m}))\wedge M -  
  (\sqrt{c}l_W^0(t)) \wedge M\right)^2dt \right)^{1/2}.
$$
Letting $m\to\infty$ we  get 
$$
\lin \E|I_n|=0.
$$


To apply Lemma  \ref{lemGS} it remains to prove the uniform integrability of the sequence of the Radon-Nikodym densities. However, see Remark \ref{rem_uni}, 
it is sufficient to prove 
that
$$
\E \exp\left\{  
\int_0^1   2\left(\sqrt{c}l_W^0(t) \wedge M\right)dW(t)-\int_0^1   2\left(\sqrt{c}l_W^0(t) \wedge M\right)^2dt
\right\}=1.
$$
The last equality follows from the Novikov theorem because  the integrands are bounded, see \cite[Theorem 6.1, Chapter VI]{LiptserShiryaev}.
Hence, the sequence of processes $\{\frac{X^{M,\Delta_n}_{nt}}{\sqrt{n}}\}$ converges in distribution to a process,
 whose distribution has a density
$$
\exp\left\{  
\int_0^1   2\left(\sqrt{c}l_W^0(t) \wedge M\right)dW(t)-\int_0^1   2\left(\sqrt{c}l_W^0(t) \wedge M\right)^2dt
\right\} 
$$
with respect to the Wiener measure.

By  the Girsanov theorem, this process
  is a weak solution to the SDE \eqref{eq492}.

The Theorem is proved.

\end{document}